\newtheorem{theorem}{Theorem}
\newtheorem{lemma}[theorem]{Lemma}
\newtheorem{corollary}[theorem]{Corollary}
\newtheorem{conjecture}[theorem]{Conjecture}
\newtheorem{definition}[theorem]{Definition}
\newtheorem{remark}[theorem]{Remark}
\newtheorem{cons}{Construction}
\title{Acyclic and complete coloring of digraphs with  the minimum and maximum possible numbers of colors}
\author{Mika Olsen\footnotemark[1] \and Christian Rubio-Montiel\footnotemark[2] \and Alejandra Silva-Ram{\' i}rez\footnotemark[1]}
\begin{document}
\maketitle
\def\thefootnote{\fnsymbol{footnote}}
	\footnotetext[1]{Departamento de Matem{\' a}ticas Aplicadas y Sistemas, UAM-Cuajimalpa, Mexico City, Mexico. {\tt [olsen|alejandra.silva]@cua.uam.mx}.}
	\footnotetext[2]{Divisi{\' o}n de Matem{\' a}ticas e Ingenier{\' i}a, FES Acatl{\' a}n, Universidad Nacional Aut{\'o}noma de M{\' e}xico, Naucalpan, Mexico. {\tt christian.rubio@acatlan.unam.mx}.}
	
	
\begin{abstract} 
The dichromatic and diachromatic numbers of a digraph are the minimum and maximum numbers of colors, respectively, in acyclic and complete colorings of the digraph. 
In this paper, we construct, for all $r \leq t$, non-symmetric digraphs with dichromatic number $r$ and diachromatic number $t$. 
Moreover, we discuss the existence of asymmetric digraphs with dichromatic number \( r \) and diachromatic number \( t \geq r \), establishing a quadratic upper bound \( b(r) \leq t \).
%
\end{abstract}

\textbf{Keywords.} Acyclic coloring,  diachromatic number, non-symmetric digraphs, oriented graphs.


\section{Introduction}

In graph theory, a classical problem is to determine, for two given parameters, whether there exists a (di)graph realizing every valid combination of these parameters. That is, given two graph invariants $\varphi$ and $\psi$, the goal is to characterize the pairs $(\alpha, \beta)$ for which there exists a (di)graph $G$ such that $\varphi(G) = \alpha$ and $\psi(G) = \beta$; see, for example, \cite{MR4923770,MR165515,Erdös_1959,MR3711038,MR4683677,MR1506881}. 
A well-known instance of this type of problem considers the minimum and maximum number of colors in a proper and complete vertex coloring: the \emph{chromatic number} $\chi(G)$ of a graph $G$ is the minimum number of colors in such a coloring, while the \emph{achromatic number} $\psi(G)$, introduced by Harary et al.\ in 1967~\cite{MR0272662}, is the maximum number of colors for such a coloring.
In 1979, Bhave~\cite{MR532949} proved that for every pair of integers \( r \leq t \), there exists a graph \( G \) such that \( \chi(G) = r \) and \( \psi(G) = t \). For further information, we refer the reader to the surveys~\cite{MR1477743,MR1427576}.

A natural question is whether this property can be extended to digraphs. We use the notion of \emph{acyclic colorings} introduced by Neumann-Lara~\cite{MR3875016} in 1982 as a natural generalization of proper colorings to digraphs, together with the extension of complete colorings proposed by Edwards~\cite{MR2998438}, where a coloring is called complete if, for every ordered pair of distinct colors $(i, j)$, there exists at least one arc directed from a vertex of color $i$ to a vertex of color $j$. Recently, in 2019, Araujo-Pardo et al.~\cite{MR3875016} formalized the \emph{achromatic number} for digraphs using acyclic colorings; this generalization applies to all digraphs and preserves the interpolation property known from the undirected case.

Following the terminology in~\cite{MR3875016,MR693366}, the \emph{dichromatic number} $\operatorname{dc}(D)$ of a digraph D is the minimum number of colors in an acyclic coloring, whereas the \emph{diachromatic number} $\operatorname{dac}(D)$ is the maximum number of colors in a complete acyclic coloring. In~\cite{MR4781357}, Zykov sums were used to construct digraphs realizing specific parameter pairs $(r, t)$ with $r \leq t$, such that $\operatorname{dc}(D) = r$ and $\operatorname{dac}(D) = t$.

This paper extends to the directed case the classical result asserting that for every pair of integers $a \leq b$, there exists a graph with chromatic number $a$ and achromatic number $b$. Since symmetric digraphs coincide with graphs in this context, their dichromatic and diachromatic numbers reduce to the corresponding undirected parameters.
We focus our attention on non-symmetric digraphs.  
First, we prove that for every pair of integers $r \leq t$, there exists a non-symmetric digraph $D$ such that $\operatorname{dc}(D) = r$ and $\operatorname{dac}(D) = t$. Second, we provide a quadratic upper bound $b(r)$ such that for every integer $t \geq b(r)$, there exists an \emph{asymmetric} digraph $D$ with $\operatorname{dc}(D) = r$ and $\operatorname{dac}(D) = t$. 
It is important to observe that these pairs improve the existing known pairs established in \cite{MR4781357}.


\section{Definitions and preliminary results}

In this paper, we consider finite digraphs. Let $k$ be a non-negative integer. An \emph{acyclic coloring} of a digraph $D$ is a function $\varphi : V(D)\to [k]$ with $[k]:=\{1,2,\ldots,k\}$. A \emph{chromatic class} of color $i$ is denoted by $C_i=\{v\in V(D) \mid \varphi(v) = i \}$, for $i\in [k]$. A \emph{sink} (resp. \emph{source}), in a digraph is a vertex with no outgoing arcs (resp. no incoming arcs).
The coloring $\varphi$ is \emph{acyclic} if no  chromatic class induces a subdigraph with a directed cycle. The \emph{dichromatic number} $dc(D)$ and the \emph{diachromatic number} $dac(D)$ are the smallest and the largest $k$ such that $D$ admits an acyclic coloring, respectively. An acyclic and complete coloring  that uses  $dc(D)$ (resp. $dac=t$) colors, is an \emph{optimal coloring} for the dichromatic number (resp. diachromatic number).
For general concepts we refer the reader to \cite{MR2472389} and  Chapter $12$ in \cite{MR2450569}.
	
For two nonempty  of vertex sets $X,Y$ of a digraph $D$, we define 
\[[X, Y] = \{ (x,y)\in A(D) \mid  x \in X, y \in Y \}.\]
	
Let $\mathds{Z}_m$ be the cyclic group of integers modulo  $m$, where $m\in \mathds{N}$ and  $J$ is a nonempty subset of  $\mathds{Z}_m \setminus \{0\}$ such that  $w\in J$ if and only  if $-w\notin J$ for every $w\in \mathds{Z}_m \setminus \{0\}$.
The \emph{circulant digraph} $\overrightarrow{C}_m(J)$ is defined by 
\[V(\overrightarrow{C}_m(J))=\mathds{Z}_m \mbox{ and  }A(\overrightarrow{C}_m(J))=\{(i,j) : i,j \in \mathds{Z}_m\mbox{ and } j-i \in J\}.\]
	
The following results are already known, and we will use them throughout this paper.

\begin{theorem}\cite{MR737621}\label{teo1}
If $T$ is a regular tournament with $2n + 1$ vertices, then $dc(T) = 2$ if and only if $T \cong \overrightarrow {C}_{2n+1}(1, \ldots, n) $.
\end{theorem}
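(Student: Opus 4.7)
The plan is to prove both implications separately, starting with the easier ``if'' direction and then tackling the structural ``only if'' direction.

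For the ``if'' direction, I would exhibit an explicit acyclic $2$-coloring of $T = \overrightarrow{C}_{2n+1}(1,\ldots,n)$. Partition $V(T) = \mathds{Z}_{2n+1}$ into $A = \{0,1,\ldots,n\}$ and $B = \{n+1,\ldots,2n\}$; for any $i<j$ lying in the same block, the difference $j-i$ belongs to $\{1,\ldots,n\}$, so $(i,j)$ is an arc of $T$. Hence $T[A]$ and $T[B]$ are transitive tournaments and, in particular, acyclic, giving $dc(T) \leq 2$. Since $T$ contains directed cycles (for example, the Hamiltonian cycle $0\to 1\to\cdots\to 2n\to 0$ via the generator $1$, where the closing arc $(2n,0)$ exists because $0-2n\equiv 1\pmod{2n+1}$), we have $dc(T)\geq 2$, and so $dc(T)=2$.

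For the ``only if'' direction, let $T$ be a regular tournament on $2n+1$ vertices with $dc(T)=2$, and fix an acyclic partition $V(T) = A\cup B$. Each induced subtournament $T[A], T[B]$ is acyclic, hence transitive, so I label $A=\{a_1,\ldots,a_a\}$ and $B=\{b_1,\ldots,b_b\}$ along the respective linear orders (so $a_i\to a_k$ iff $i<k$). Regularity, i.e.\ $d^+(v)=n$ for all $v$, implies that $a_i$ sends exactly $n-a+i$ arcs to $B$; non-negativity at $i=1$ forces $a\leq n+1$, and symmetrically $b\leq n+1$. Combined with $a+b=2n+1$, this pins down $\{a,b\}=\{n,n+1\}$. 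Relabeling if necessary, I take $a=n+1$ and $b=n$, so that $a_i$ sends $i-1$ arcs to $B$ and $b_j$ sends $j$ arcs to $A$.

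The heart of the argument, and what I expect to be the main obstacle, is to show that these degree constraints uniquely determine the bipartite pattern, namely $a_i \to b_j$ if and only if $i>j$. Encoding the arcs from $A$ to $B$ as a $0/1$ matrix $M\in\{0,1\}^{(n+1)\times n}$ with row sums $(0,1,\ldots,n)$ and column sums $(n,n-1,\ldots,1)$, I would proceed by a peeling induction. First, row $1$ is all zeros and row $n+1$ is all ones; then column $1$ has sum $n$, forcing rows $2,\ldots,n+1$ to each contribute $1$ there, so $M_{i,1}=1$ for $i\geq 2$; symmetrically column $n$ has sum $1$, so $M_{i,n}=0$ for $i\leq n$. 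Excising these two rows and two columns yields a submatrix of size $(n-1)\times(n-2)$ whose row sums are $(0,1,\ldots,n-2)$ and whose column sums are $(n-2,\ldots,1)$, which is an instance of the same problem with parameter $n-2$. An induction on $n$ with base cases $n\in\{1,2\}$ handled directly closes the claim; equivalently, one can invoke the uniqueness part of the Gale--Ryser theorem for the self-conjugate staircase partition.

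Once the bipartite pattern is pinned down, the isomorphism is routine: define $\phi\colon V(T)\to\mathds{Z}_{2n+1}$ by $\phi(a_i)=i-1$ for $i=1,\ldots,n+1$ and $\phi(b_j)=n+j$ for $j=1,\ldots,n$, and verify in each of the four cases (arcs inside $A$, inside $B$, from $A$ to $B$, from $B$ to $A$) that the image of an arc has difference in $\{1,\ldots,n\}$ modulo $2n+1$. This exhibits the desired isomorphism $T\cong\overrightarrow{C}_{2n+1}(1,\ldots,n)$ and completes the proof.
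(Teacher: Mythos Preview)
Your proof is correct in both directions. The ``if'' direction is routine, and for the ``only if'' direction your counting argument correctly forces $\{|A|,|B|\}=\{n,n+1\}$; the peeling induction on the $(n+1)\times n$ matrix with row sums $(0,1,\ldots,n)$ and column sums $(n,n-1,\ldots,1)$ is sound (rows $1$ and $n+1$ and columns $1$ and $n$ are indeed forced, and the residual problem is the same instance with parameter $n-2$), and the final isomorphism check goes through in all four cases.

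As for comparison with the paper: there is nothing to compare against. The paper does not prove this theorem; it merely quotes it as a preliminary result from the literature (reference \cite{MR737621}) and uses it later as a black box. So your argument is not an alternative to the paper's proof but rather a self-contained proof of a result the paper imports. If anything, your write-up could stand on its own as a short proof of the cited theorem; the only cosmetic point is that the appeal to ``the uniqueness part of the Gale--Ryser theorem'' is not standard phrasing (Gale--Ryser is usually stated as an existence criterion), so it is cleaner to rely solely on your peeling induction, which already does all the work.
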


\begin{theorem}\cite{MR3875016}\label{teo2}
Let $T$ be a circulant tournament or a transitive tournament of order n. Then 
\[dac (T)=\left\lceil\frac{n}{2}\right\rceil.\]
\end{theorem}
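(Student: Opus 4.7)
The plan is to establish matching upper and lower bounds on $dac(T)$.

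For the upper bound $dac(T)\le\lceil n/2\rceil$, I would argue it holds for every tournament on $n$ vertices. Suppose $\varphi$ is a complete acyclic coloring of a tournament $T$ using $k$ colors. If two chromatic classes $C_i,C_j$ were both singletons, then the unique arc between their two vertices would leave one of $[C_i,C_j]$, $[C_j,C_i]$ empty, violating completeness. Hence at most one class is a singleton, so $n=\sum_{i=1}^{k}|C_i|\ge 2(k-1)+1=2k-1$, which rearranges to $k\le\lceil n/2\rceil$.

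For the lower bound, I would exhibit a complete acyclic coloring with $\lceil n/2\rceil$ colors in each case. When $T$ is the transitive tournament, label the vertices $v_1,\ldots,v_n$ so that its arcs are $v_i\to v_j$ whenever $i<j$, and take $C_a=\{v_a,v_{n+1-a}\}$ for $1\le a\le\lfloor n/2\rfloor$, adjoining the singleton $\{v_{(n+1)/2}\}$ when $n$ is odd. Each class spans at most one arc and is therefore acyclic. For $a<b\le\lceil n/2\rceil$ one has $a+b\le n$, so the arcs $v_a\to v_b$ and $v_b\to v_{n+1-a}$ witness both $[C_a,C_b]\neq\emptyset$ and $[C_b,C_a]\neq\emptyset$.

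When $T=\overrightarrow{C}_n(J)$ is circulant, $n$ is necessarily odd; write $n=2m+1$ and partition $\mathds{Z}_n$ into the singleton $\{0\}$ and the $m$ pairs $C_v=\{v,-v\}$ for $1\le v\le m$. Each pair induces a single arc, so every class is acyclic. Given distinct $v,w\in\{1,\ldots,m\}$, the elements $v\pm w$ are nonzero modulo $n$, so the hypothesis on $J$ forces exactly one of $\{v-w,w-v\}$ and exactly one of $\{v+w,-v-w\}$ to belong to $J$. A short case analysis on these two binary choices shows that in every case there is at least one arc from $C_v$ to $C_w$ and at least one from $C_w$ to $C_v$. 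The cases involving $\{0\}$ are immediate: exactly one of $v,-v$ lies in $J$, yielding one arc each way between $\{0\}$ and $C_v$.

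The main obstacle I expect is the circulant construction: one must verify, for every unordered pair $\{v,w\}$ and for each of the four combinations of orientations dictated by $J$, that the bidirectional arc condition is met between $C_v$ and $C_w$. Once that bookkeeping is completed, combining the two constructions with the upper bound gives $dac(T)=\lceil n/2\rceil$.
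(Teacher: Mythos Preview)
The paper does not actually prove this theorem; it is quoted from \cite{MR3875016} as a preliminary result, so there is no in-paper proof to compare against. Your argument is nonetheless correct and is essentially the standard one.

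A small streamlining for the circulant case: you do not need the four-way case analysis. The map $x\mapsto -x$ on $\mathds{Z}_n$ is an arc-reversing bijection of $\overrightarrow{C}_n(J)$ (since $b-a\in J$ iff $(-a)-(-b)\in J$) that fixes each class $C_v=\{v,-v\}$ setwise. Hence any arc from $C_v$ to $C_w$ is paired with an arc from $C_w$ to $C_v$, and it suffices to note that there is \emph{some} arc between the two classes, which is automatic in a tournament. The same observation handles the pairs involving $\{0\}$. Your transitive-tournament construction and the upper bound are fine as written; note that the upper bound you give in fact applies to every tournament, which is a slightly stronger statement than needed but entirely correct.
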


The next corollary is a consequence of Theorems \ref{teo1} and \ref{teo2}.

\begin{corollary}
For every $t\geq 2$, there exists a  $2$-dichromatic tournament with $dac(T)=t$.
\end{corollary}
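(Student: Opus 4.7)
The plan is to exhibit, for each $t \geq 2$, a single explicit circulant tournament that satisfies both conclusions, and then read off the two parameters directly from the two cited theorems. The target is the circulant tournament on an odd number of vertices chosen so that Theorem~\ref{teo2} yields exactly $t$.

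Concretely, for a fixed $t \geq 2$ I would set $n := t-1$ and consider the circulant tournament
\[
T := \overrightarrow{C}_{2n+1}(1,\ldots,n) = \overrightarrow{C}_{2t-1}(1,\ldots,t-1).
\]
This $T$ is a regular tournament on $2n+1 = 2t-1$ vertices. First, I would invoke Theorem~\ref{teo1} (in its ``if'' direction) to conclude that $\operatorname{dc}(T)=2$, since by construction $T$ is the distinguished regular tournament appearing in the theorem; this requires only checking that $T$ is indeed regular of order $2n+1$ (each vertex has out-degree and in-degree equal to $n$, because the connection set $\{1,\ldots,n\}$ has size $n$ and its negatives $\{-1,\ldots,-n\}\equiv\{n+1,\ldots,2n\}\pmod{2n+1}$ partition the remaining non-zero residues).

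Second, since $T$ is a circulant tournament of order $2t-1$, Theorem~\ref{teo2} applies and gives
\[
\operatorname{dac}(T)=\left\lceil\frac{2t-1}{2}\right\rceil = t.
\]
Combining the two values yields the desired tournament. There is no real obstacle here: the corollary is essentially a one-line specialization of the two theorems once the order $2t-1$ is chosen so that the ceiling $\lceil n/2\rceil$ hits the prescribed value $t$. The only point worth stating explicitly in the write-up is the choice $n=t-1$ and the verification that $t\geq 2$ is exactly the range for which this construction produces a genuine tournament (i.e., $2t-1\geq 3$), matching the hypothesis of the corollary.
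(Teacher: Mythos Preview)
Your proposal is correct and matches the paper's approach: the corollary is stated without proof, merely as a consequence of Theorems~\ref{teo1} and~\ref{teo2}, and your choice of $T=\overrightarrow{C}_{2t-1}(1,\ldots,t-1)$ is exactly the intended specialization.
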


\begin{definition}\cite{MR2564801}
For every pair of integers $r,s$ with $r\geq 3, s\geq 0$; let  $H_{p,s} =\overrightarrow {C}_{(r-1)(p+1)+1}(J)$ for $p=(r-2)(s+1)+1$ and $$J=\{1,2,\ldots,p\}\cup\{p+2,p+3,\ldots,2p-s\}\cup\ldots\cup\{(r-2)p+r-1\}.$$
\end{definition}

When $s=0$, we denote $H_{p,s}$ by $H_r$.

\begin{theorem}\label{teo5}\cite{MR2564801}
For $r\geq 3, s\geq 0$,  $H_{p,s}$ is a vertex critical $r$-dichromatic circulant tournament.
\end{theorem}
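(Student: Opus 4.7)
The plan is to verify three claims: (i) $H_{p,s}$ is a tournament; (ii) $dc(H_{p,s}) = r$; and (iii) removing any vertex strictly decreases the dichromatic number. Write $n = (r-1)(p+1)+1$ for the order of $H_{p,s}$. For (i), one checks by a careful inventory of the blocks defining $J$ that $J \cap (-J) = \emptyset$ and $J \cup (-J) = \mathbb{Z}_n \setminus \{0\}$; this makes $\overrightarrow{C}_n(J)$ a tournament.

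For the upper bound $dc(H_{p,s}) \leq r$, I would exhibit an explicit acyclic $r$-coloring by partitioning $\mathbb{Z}_n$ into consecutive intervals $C_k = \{k(p+1),\ldots,k(p+1)+p\}$ for $k = 0,\ldots,r-2$, together with the singleton $C_{r-1} = \{(r-1)(p+1)\}$. Within each $C_k$ with $k \leq r-2$, every positive difference lies in $\{1,\ldots,p\} \subseteq J$, so $C_k$ induces a transitive (hence acyclic) sub-tournament, and the singleton class is trivially acyclic.

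The lower bound $dc(H_{p,s}) \geq r$ is the substantial step. The goal is to prove that every acyclic (equivalently, transitive) induced sub-tournament of $H_{p,s}$ has at most $p+1$ vertices; combined with $|V(H_{p,s})| = (r-1)(p+1)+1$, this gives $dc(H_{p,s}) \geq \lceil n/(p+1) \rceil = r$. To bound the transitive sub-tournaments, note that such a set $S = \{v_1 < \cdots < v_m\}$ must satisfy $v_j - v_i \in J$ for all $i < j$; the specific non-edges $p+1$, $2p-s+1$, and the analogous values for the subsequent blocks in the complement of $J$ are precisely engineered to forbid $m > p+1$, through a case analysis on how the consecutive gaps $v_{i+1}-v_i$ distribute among the blocks of $J$.

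Finally, vertex-criticality follows almost for free: the $r$-coloring above already contains a singleton class, so deleting that vertex leaves an acyclic $(r-1)$-coloring of the remainder. Since $\overrightarrow{C}_n(J)$ is vertex-transitive under the translations $i \mapsto i+c$, the same conclusion holds after removing \emph{any} vertex. The principal obstacle is the lower bound in (ii); the combinatorial argument needed to rule out transitive sub-tournaments of size $p+2$ is the delicate part, and is precisely where the carefully chosen gaps in $J$ earn their keep.
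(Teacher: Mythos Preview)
The paper does not prove Theorem~\ref{teo5}: it is quoted from \cite{MR2564801} and used as a black box (to feed Corollary~\ref{cor6} and then Theorem~\ref{teo14}). So there is no in-paper argument to compare your sketch against.

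That said, a brief assessment of your sketch. The upper bound and the criticality step are clean and correct: the interval coloring $C_k=\{k(p+1),\dots,k(p+1)+p\}$ for $k\le r-2$ together with the singleton $C_{r-1}$ is an acyclic $r$-coloring (differences inside a block lie in $\{1,\dots,p\}\subseteq J$, forcing transitivity), and vertex-transitivity of circulants lets you transport the singleton deletion to any vertex.

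The lower bound, which you flag as the delicate point, has a real gap in the form stated. You write that a transitive set $S=\{v_1<\cdots<v_m\}$ must satisfy $v_j-v_i\in J$ for all $i<j$, but that is not automatic: the transitive order on $S$ need not agree with the residue order on $\mathbb{Z}_n$, and in a cyclic group there is no canonical linear order anyway. What is true is that $S$ is acyclic iff there is \emph{some} enumeration $v_{\sigma(1)},\dots,v_{\sigma(m)}$ with all forward differences in $J$; controlling that enumeration is exactly where the structure of $J$ (and the excluded jumps $p+1$, $2p-s+1,\dots$) must be used. The original proof in \cite{MR2564801} handles this via a more structural argument tailored to these circulants rather than the straight ``gap-distribution'' case analysis you outline; your plan is plausible but would need a preliminary lemma normalising the order of a transitive subset before the block counting can proceed.
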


The next corollary is a consequence  of Theorems \ref{teo2} and \ref{teo5}.

\begin{corollary}\label{cor6}
For every $r\geq3$ and $s\geq 0$, there exists a  circulant tournament $T$ such that   $dc(T)=r$ and
$$dac(T)= \frac{(r^2-3r+2)s}{2}+\frac{r^2-r+2}{2}. $$	
		
When $s=0$, there exists a circulant tournament $T$ with $dc(T)=r$ and   $dac(T)= (r^2-r+2)/2.$ 
\end{corollary}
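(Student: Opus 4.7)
The plan is to take the family $H_{p,s}$ introduced in the preceding definition as the witness tournament $T$ and read off the two parameters directly from Theorems \ref{teo2} and \ref{teo5}. Since Theorem \ref{teo5} already guarantees that $H_{p,s}$ is an $r$-dichromatic circulant tournament, only the diachromatic number remains to be computed, and Theorem \ref{teo2} reduces this to a vertex count.

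Concretely, I would first recall that by definition $H_{p,s}$ has $n=(r-1)(p+1)+1$ vertices with $p=(r-2)(s+1)+1$. Substituting $p$ and expanding gives
\[
n=(r-1)\bigl((r-2)(s+1)+2\bigr)+1=(r^{2}-3r+2)s+(r^{2}-r+1).
\]
Because $r(r-1)$ is even, $r^{2}-r+1$ is odd, and because $(r-1)(r-2)$ is even, the summand $(r^{2}-3r+2)s$ is even; hence $n$ is odd. Theorem \ref{teo2} therefore yields
\[
dac(H_{p,s})=\left\lceil\tfrac{n}{2}\right\rceil=\frac{n+1}{2}=\frac{(r^{2}-3r+2)s}{2}+\frac{r^{2}-r+2}{2},
\]
which is precisely the formula stated in the corollary. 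Setting $s=0$ kills the first summand and leaves $(r^{2}-r+2)/2$, giving the final sentence of the statement.

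There is essentially no obstacle here: the whole argument is a substitution followed by a parity check to turn the ceiling into a closed form. The only place where one could slip is in verifying that $n$ is odd (so that $\lceil n/2\rceil=(n+1)/2$ rather than $n/2$), but the parity of $(r-1)(r-2)$ and of $r(r-1)$ settles this uniformly in $r$ and $s$. Thus the proof reduces to citing Theorem \ref{teo5} for $dc(H_{p,s})=r$, citing Theorem \ref{teo2} for the ceiling formula, and performing the short algebraic simplification above.
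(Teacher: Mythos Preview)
Your proposal is correct and is exactly the argument the paper intends: it states that Corollary~\ref{cor6} is a consequence of Theorems~\ref{teo2} and~\ref{teo5}, and you have simply spelled out the vertex count $n=(r^{2}-3r+2)s+(r^{2}-r+1)$ of $H_{p,s}$ and the parity check needed to evaluate $\lceil n/2\rceil$. There is nothing to add or amend.
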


Table \ref{table1} shows the values for the diachromatic number in terms of the dichromatic number, obtained in the Corollary \ref{cor6}.
Note the gaps between the values of the diachromatic number depend on the  dichromatic number.

\begin{table}[htbp!]
	\centering
	\begin{tabular}{|c|c|c|c|c|c|c|c|c|c|c|}
		\hline
		$dc(T)$  & $dac(T)$ \\ \hline
		3 & 4,5,6,7,8,9\ldots    \\ \hline
		4&7,10,13,16,19,22\ldots \\ \hline
		5 & 11,17,23,29,35,41\ldots  \\  \hline
		&\vdots
		\\ \hline
		9 &  37,65,93,121,149,177\ldots\\ \hline
		&\vdots
		\\ \hline
	\end{tabular}
	\caption{The $dc(T) $ and the $dac(T)$ of the  family in  Corollary \ref{cor6}.}\label{table1}
\end{table}

The next result is a reformulation of the Interpolation Theorem in terms of complete coloring.

\begin{theorem}\label{teo7}\cite{MR3875016}
Let $D$ be a digraph. For every integer $l$ with $dc(D) \leq l \leq dac(D)$ there exists a  complete and acyclic coloring with $l$ colors.
\end{theorem}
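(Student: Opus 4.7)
The plan is to prove Theorem \ref{teo7} by establishing that the set of integers $l$ for which $D$ admits a complete acyclic $l$-coloring forms an unbroken interval with endpoints $dc(D)$ and $dac(D)$. The upper endpoint is immediate from the definition of $dac(D)$, so the work reduces to two pieces: showing $dc(D)$ is itself realized by a complete coloring, and proving a one-step descent from any complete acyclic $k$-coloring with $k > dc(D)$ to a complete acyclic $(k-1)$-coloring. Iterating the descent from a fixed complete acyclic $dac(D)$-coloring down to level $dc(D)$ then yields complete acyclic colorings of every intermediate size $l$.

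For the lower endpoint, I would show that every optimal acyclic coloring is automatically complete. Let $\phi$ be an acyclic coloring with $dc(D)$ colors and suppose, for contradiction, that some ordered pair $(i,j)$ satisfies $[C_i, C_j] = \emptyset$. Because each of $C_i$ and $C_j$ is internally acyclic, any directed cycle contained in $C_i \cup C_j$ must use at least one vertex of each class and therefore must traverse arcs in both $[C_i, C_j]$ and $[C_j, C_i]$; the emptiness hypothesis rules this out, so $C_i \cup C_j$ is acyclic. Merging $C_i$ with $C_j$ then yields an acyclic coloring with $dc(D)-1$ colors, contradicting the minimality of $dc(D)$.

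For the descent step, let $\phi$ be a complete acyclic $k$-coloring with $k > dc(D)$. The strategy is to find two color classes $C_i, C_j$ whose union is acyclic and to merge them. Merging automatically preserves completeness: for every remaining color $h$, the arcs $[C_h, C_i]$, $[C_i, C_h]$, $[C_h, C_j]$, $[C_j, C_h]$ provided by completeness of $\phi$ supply arcs in both directions between $C_h$ and the merged class $C_i \cup C_j$ in the new coloring. Hence the only nontrivial condition to arrange is the existence of a mergeable pair.

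The main obstacle, and the technical heart of the proof, is establishing the existence of a mergeable pair whenever $k > dc(D)$. The difficulty is precisely that completeness forces arcs in both directions between every pair of classes, so the union of two arbitrary classes may perfectly well contain a directed cycle. To exploit the hypothesis $k > dc(D)$, I would bring in a fixed optimal coloring $\phi_0$ with $dc(D)$ colors (which by the base case is itself complete) and compare it with $\phi$. Using the common refinement $\phi \vee \phi_0$ together with a pigeonhole on how the $\phi$-classes distribute across the $\phi_0$-classes, one can try to locate either a pair of $\phi$-classes whose union sits inside a single acyclic $\phi_0$-class (and is therefore acyclic) or a pair related by a controlled Kempe-style exchange of vertices producing an acyclic union after a small recoloring. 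I expect the combinatorial extraction of such a pair to be where the technical work concentrates.
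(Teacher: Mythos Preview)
The paper does not contain a proof of Theorem~\ref{teo7}; it is quoted from \cite{MR3875016} as a preliminary result and used as a black box in the proof of Lemma~\ref{lem10}. So there is no ``paper's own proof'' to compare your attempt against.

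On its own merits, your base case is correct: an acyclic $dc(D)$-coloring is automatically complete, for exactly the reason you give. The descent step, however, has a genuine gap. Your primary mechanism is to merge two entire classes $C_i,C_j$ with $D[C_i\cup C_j]$ acyclic, but such a pair need not exist even when $k>dc(D)$. Take $V(D)=\{a_1,a_2,b_1,b_2,c_1,c_2\}$ with arc set consisting of the three directed $4$-cycles $a_1b_1a_2b_2a_1$, $a_1c_1a_2c_2a_1$, $b_1c_1b_2c_2b_1$ and nothing else. The partition $C_1=\{a_1,a_2\}$, $C_2=\{b_1,b_2\}$, $C_3=\{c_1,c_2\}$ is a complete acyclic $3$-coloring in which every union $C_i\cup C_j$ contains a directed $4$-cycle, so no two classes are mergeable. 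Yet $B_1=\{a_1,b_1,c_1\}$, $B_2=\{a_2,b_2,c_2\}$ is an acyclic (indeed complete) $2$-coloring, so $dc(D)=2<3$. Hence the hypothesis $k>dc(D)$ does not by itself supply a mergeable pair, and the pigeonhole/common-refinement heuristic you sketch cannot manufacture one either: in this example each $C_i$ meets both $B_1$ and $B_2$, so no $\phi$-class, let alone two of them, sits inside a single $\phi_0$-class. Your fallback ``Kempe-style exchange'' is exactly where the real work would have to happen, but as written it is a hope rather than an argument; the proof in \cite{MR3875016} requires a recoloring step more delicate than wholesale merging of classes.
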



\section{Construction}
	
In this section, we define a construction that allows us to fill the gaps observed in Table \ref{table1}.

\begin{cons}\label{cons8}
Let $D$ be a digraph and let $P=\{v_1,v_2,\ldots,v_s\}$ be a path such that $V(D)\cap V(P)= \emptyset$.
For a non-negative integer  $n$ and  $v_0\in V(D)$,  we define the digraph $D_n(v_0)$ as follows:
	$$\begin{array}{ccl}
	V(D_n(v_0)) &=& V(D)\cup V(P),\\\\ 
	A(D_n(v_0))& =& A(D)\cup A(P')\cup M,
	\end{array}$$

where $V(P^{'})=\{v_0,v_1,v_2,\ldots, v_s\}$ and the set of arcs of $P'$ is given by 
		
	\begin{center}
		$A(P') = \left \{ \begin{array}{cll} (v_j,v_i) & \mbox{ if }j-i \mbox{ is even;}
		\\ (v_i,v_j) &\mbox{ if }j-i \mbox{ is odd}. \end{array}\right.$ 
	\end{center}

The arcs between  vertices of the digraph $D$ and  vertices of the path $P$ are defined by  the  set $M$. For all $x\in V(D)-\{v_0\} $ and $1\leq {2i} \leq {2i+1} \leq s $ 

	\begin{center}
		$M = \left \{ \begin{array}{cl} (x,v_{2i}),(v_{2i+1}, x )& \mbox{ if } x\in N^{-}(v_0)\setminus N^{+}(v_0);
		\\(v_{2i}, x),(x, v_{2i+1}) & \mbox{ if } x\in N^{+}(v_0).
		\end{array}\right.$
	\end{center}

\end{cons}

Note that the arc $(v_0, x)$ may be symmetric. Furthermore $D_0(v_0)=D$.

In Figure \ref{fig1}, observe that if $i<j$ are both even (odd), then the  arc between $v_i$ and $v_j$ is $v_jv_i$, so we have the following remake. 
\begin{figure}[htbp!]	
	\centering
	\includegraphics[scale=0.5]{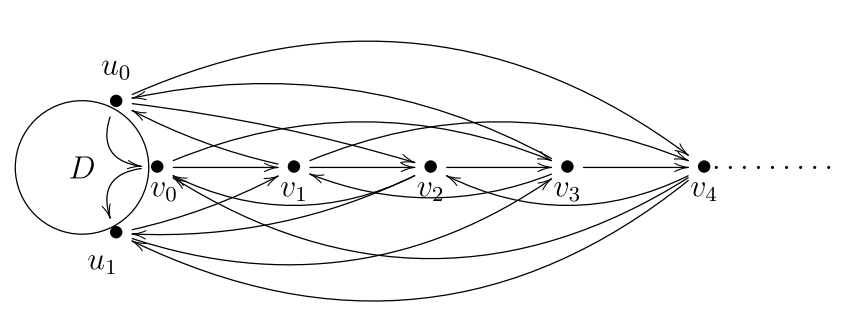}
	\caption{The arcs between the vertices of  digraph $D$  and vertices of path $P$ in the Construction  \ref{cons8}.}\label{fig1}
\end{figure}
\begin{remark}\label{rem9}
Consider the path $P'$ in Construction \ref{cons8} and let $1\leq {2i} < {2i-1} \leq s$. Then 
	\begin{enumerate}
		\item [(i)] $P^{'} \left\langle \{v_{2i}\}_{i=0,1,\ldots,\left\lfloor \frac{s}{2}\right\rfloor} \right\rangle $ is a transitive subtournament  with $v_0$ is a sink.
		\item [(ii)] $P^{'}\left\langle \{v_{2i-1}\}_{i=1,\ldots,\left\lfloor \frac{s}{2}\right\rfloor}  \right\rangle $ is a transitive subtournament with $v_1$ is a sink.
	\end{enumerate}
\end{remark}


The following lemma determines the diachromatic number of a digraph  obtained by Construction \ref{cons8},  in terms of  dichromatic number and the length of path $P$ when the length is even.
	
\begin{lemma}\label{lem10}
Let $D$ be a digraph with $dac(D)=t$ and let $k$  be a non-negative integer. Then $$dac(D_{2k}(v_0))= t+k.$$
\end{lemma}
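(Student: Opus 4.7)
The plan is to establish both $dac(D_{2k}(v_0))\geq t+k$ and $dac(D_{2k}(v_0))\leq t+k$.

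For the lower bound, I would take an optimal complete acyclic coloring of $D$ using $t$ colors and extend it by introducing $k$ new colors $c_1,\ldots,c_k$, assigning color $c_i$ to both $v_{2i-1}$ and $v_{2i}$ for each $i\in\{1,\ldots,k\}$. Acyclicity is immediate: each new class induces only the single arc $v_{2i-1}\to v_{2i}$, while the original $t$ classes are unchanged. To verify completeness, I would check that between two new colors $c_i$ and $c_j$ with $i<j$, the arcs of $P'$ among $\{v_{2i-1},v_{2i},v_{2j-1},v_{2j}\}$ go in both directions (for instance $v_{2i-1}\to v_{2j}$ and $v_{2j}\to v_{2i}$, from the parity rule defining $A(P')$), and that between a new color $c_i$ and any original color $\gamma$, the set $M$ together with the arcs $v_{2i}\to v_0$ and $v_0\to v_{2i-1}$ yields arcs in both directions, invoking any vertex of the class of $\gamma$ that belongs to $\{v_0\}\cup N^-(v_0)\cup N^+(v_0)$.

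For the upper bound, I would take a complete acyclic coloring $\varphi$ of $D_{2k}(v_0)$ with $l$ colors and show $l\leq t+k$. Split the colors into three disjoint sets: $A$ of colors used only in $V(D)$, $B$ of colors used only in $V(P)$, and $C$ of colors used in both, so that $l=|A|+|B|+|C|$. The approach is, first, to show that the restriction of $\varphi$ to $V(D)$ either already is a complete acyclic coloring of $D$ or can be modified into one with the same number of colors, giving $|A|+|C|\leq dac(D)=t$; and, second, to prove $|B|\leq k$ by grouping the path vertices into the $k$ blocks $\{v_{2i-1},v_{2i}\}$ and using Remark \ref{rem9}. Indeed, since each path-only color class is a transitive subset of $P$, and completeness between two path-only colors forces arcs in both directions inside $P'$, the interaction between the even and odd transitive subtournaments severely limits how the $2k$ path vertices can be partitioned into distinct path-only classes, yielding the desired bound.

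The main obstacle is handling the completeness repair of $\varphi$ restricted to $V(D)$ together with the bound $|B|\leq k$. The repair is delicate because a completeness witness in $\varphi$ between two colors of $A\cup C$ may be an arc with a path endpoint (through $M$ or through $v_0$), so the witness must be transferred into $V(D)$ itself. This transfer relies on the fact that $M$ connects path vertices only to vertices of $N(v_0)\cup\{v_0\}$, so that any completeness interaction between $V(D)$-colors mediated by a path vertex can be rerouted through $v_0$ or through the $V(D)$-parts of type-$C$ classes. Once both steps are in place, the bound $l=|A|+|B|+|C|\leq t+k$ follows immediately.
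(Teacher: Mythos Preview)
Your lower bound is exactly the paper's: extend an optimal $t$-coloring of $D$ by giving $\{v_{2i-1},v_{2i}\}$ a fresh color for each $i\in[k]$, and verify acyclicity and completeness. Your completeness check is fine (note that every original class meets $\{v_0\}\cup N^-(v_0)\cup N^+(v_0)$ because the original coloring of $D$ is complete, so your ``invoke a neighbour of $v_0$'' step is justified).

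For the upper bound, your decomposition $l=|A|+|B|+|C|$ and the target inequalities $|A|+|C|\le t$ and $|B|\le k$ match the paper's structure, but your execution of both pieces is weaker than what the paper does.

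\emph{On $|A|+|C|\le t$.} You correctly flag that the restriction of a complete coloring of $D_{2k}(v_0)$ to $V(D)$ need not be complete and that a ``repair'' is needed, but you do not actually supply one; the rerouting-through-$v_0$ idea is not developed into an argument. The paper, for its part, simply asserts that the restriction to $V(D)$ uses at most $t$ colors and moves on. So here you are not worse than the paper, but you have not filled the gap you yourself identified.

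\emph{On $|B|\le k$.} This is where you miss the paper's key shortcut. The paper does not analyze how path-only classes interact with the parity structure of $P'$ in any elaborate way. Instead it argues by contradiction: if $|B|\ge k+1$, then $k+1$ colors are confined to the $2k$ vertices of $P$, and a counting/pigeonhole step forces at least two of these path-only classes to be singletons, say $\{v_j\}$ and $\{v_m\}$. But $P'$ is a tournament on $\{v_0,\dots,v_{2k}\}$, so between $v_j$ and $v_m$ there is exactly one arc; hence one of $[\{v_j\},\{v_m\}]$, $[\{v_m\},\{v_j\}]$ is empty, contradicting completeness. Your ``group into blocks and use Remark~\ref{rem9}'' plan is much vaguer and does not isolate this clean obstruction. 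Replacing your block argument by the two-singleton observation brings your proof in line with the paper's.
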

\begin{proof}
Let $D_{2k}(v_0)$ be a digraph obtained by  Construction \ref{cons8}, and consider an optimal coloring $\varphi:V(D) \to [t]$ for the diachromatic number.
We extend  the  coloring  $\varphi$ to a coloring $\varphi':V(D_{2k}(v_0))\to [t+k]$ as follows  
		
\[\varphi'(w) =  \left \{ \begin{array}{lll}
\varphi(w)& \mbox{if  }w\in V(D);\\
t+i & w\in\{v_{2i-1},v_{2i} \}  \mbox { for all } i\in[k].
\end{array}\right.\]

The chromatic classes  of $\varphi'$ are 
\[C_{i}'=C_{i} \mbox{ for } i\in[t] \mbox{ and } C_{t+i}'= \{v_{2i-1}, v_{2i}\} \mbox{ for } i\in [k].\]

Note that  for $i\in [t]$ the chromatic  classes  $C_i'$ are acyclic  by construction and  for $i\in [k]$ the   $C_{t+i}'$ are acyclic classes of order $2$.  Therefore coloring $\varphi'$ is acyclic.
		
In order to prove that $\varphi'$ is complete, we consider the following the three cases  separately: $i< j$ for $i,j\in [t]$; $i\in [t]$, $t<j\leq t+k$; and $t<i<j\leq t+k$.
Let  $i,j\in [t]$ with  $i<j$ and $u,v \in V(D_{2k}(v_0))$ such that $\varphi'(u)=i$ and $\varphi'(v)=j$. By  definition of  the coloring $\varphi'$,   $v, u$  are vertices that belongs to digraph $D$. Since $u,v \in V(D)$, by definition of coloring $\varphi$ it follows that $[C_i',C_j'],[C_j',C_i']\neq \emptyset$.
	Let $i\in [t]$, $j=t+l$ with $l\in [k]$ and $u,v \in V(D_{2k}(v_0))$ such that $\varphi'(u)=i$ and $\varphi'(v)=j$. By definition of the chromatic class,  $v\in  \{v_{2l-1}, v_{2l}\}$, and by the set of arc $M$ in  Construction \ref{cons8},  it follows that  $[C_{i}',C_{j}' ],[C_{j}',C_{i}']\neq \emptyset$.  Let $t<i<j\leq t+k$ and  $u,v \in V(D_{2k}(v_0))$ such that $\varphi'(u)=i$ and $\varphi'(v)=j$. By definition of the chromatic classes and  of the adjacencies on the path $P$ in  Construction \ref{cons8}, it follows that $[C_{j}',C_{l}'], [C_{l}',C_{j}']\neq \emptyset$. Hence, $\varphi'$ is  complete and $dac(D_{2k}(v_0))\geq t+k$.
		
Assume, to the contrary, that $dac(D_{2k}(v_0))\geq  t+k+1$. By  Theorem \ref{teo7},  there exists a complete and acyclic coloring  $\phi:V(D_{2k}(v_0))\to [t+k+1].$ Since $\phi$ restricted to $V(D)$ uses at most $t$ colors, the coloring $\phi$ restricted to $V(P)$ uses at list  the remaining $k+1$ colors. By  Pigeonhole Principle, there is a singular  chromatic class $C_\alpha' =\{v_j\}$ with $v_j \in V(P)$.
If  $j=2i$ for $i\in[k]$, by  definition  the set of arc $M$ in Construction \ref{cons8}, it follows that $[C_{j}', C_{\alpha}']=\emptyset$. Analogously, if $j=2i-1$  for $i\in[k]$ then  $[C_{\alpha}', C_{j}']=\emptyset$.  Both cases contradicts that the coloring $\phi$ is complete, thus  $dac(D_{2k}(v_0))\leq n+k$ and  the result follows.
\end{proof}

Let $\overleftrightarrow {K}_n$ be complete symmetric digraph with $n$ vertices. Recall that, for a  complete symmetric digraph $\overleftrightarrow{K}_n$ the dichromatic number and diachromatic number is equal to $n$. 

\begin{theorem}\label{teo11}
For each pair of integers $r,t$ with $ 1\leq r\leq t$, there exists a non-symmetric $D$ digraph, such that $dc(D)=r$ and $dac(D)=t.$
\end{theorem}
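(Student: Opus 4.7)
The plan is to split the proof by the value of $r$. For $r=1$ I would use a transitive tournament; for $r\geq 2$ I would apply Construction~\ref{cons8} to a carefully chosen base $D^{\ast}$, namely $\overleftrightarrow{K}_r$ augmented by a pendant vertex, which has the crucial property $dc(D^{\ast}) = dac(D^{\ast}) = r$. The key observation is that although $\overleftrightarrow{K}_r$ is symmetric, adding one pendant arc breaks symmetry without disturbing either colouring parameter, and Lemma~\ref{lem10} then lets us raise $dac$ to any value $\geq r$ while preserving $dc$.

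For the case $r=1$, I would take $D$ to be a transitive tournament on $2t$ vertices. Since $D$ is acyclic, $dc(D)=1$; Theorem~\ref{teo2} gives $dac(D)=\lceil 2t/2\rceil = t$; and $D$ is non-symmetric as a tournament on at least two vertices.

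For $r\geq 2$, let $D^{\ast}$ be the digraph obtained from $\overleftrightarrow{K}_r$ on vertex set $\{u_1,\ldots,u_r\}$ by adding a new vertex $v$ and the single arc $v\to u_1$. The first step is to verify $dc(D^{\ast}) = dac(D^{\ast}) = r$: the bound $dc \geq r$ follows since $\overleftrightarrow{K}_r$ is an induced subdigraph of $D^{\ast}$, while $dc \leq r$ is witnessed by the colouring $u_i\mapsto i$, $v\mapsto 1$; and $dac\leq r$ follows because any colouring with more than $r$ colours on $r+1$ vertices forces $\{v\}$ to be a singleton class, where completeness fails since $v$'s only adjacency is $v\to u_1$. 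Given $t\geq r$, the main construction is then $D := (D^{\ast})_{2k}(u_1)$ with $k := t-r$, so that Lemma~\ref{lem10} yields $dac(D) = r+k = t$ at once. For $dc(D)=r$ the lower bound again follows from $\overleftrightarrow{K}_r\subseteq D$; for the upper bound I would propose the colouring $u_i\mapsto i$, $v\mapsto 1$, $v_{2j}\mapsto 1$, $v_{2j-1}\mapsto 2$. Non-symmetry of $D$ follows because the arc $v\to u_1$ is inherited from $D^{\ast}$.

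I expect the main obstacle to be checking that the colouring above is acyclic. By Remark~\ref{rem9}, the even and the odd path vertices each induce a transitive sub-tournament. Combined with the structure of $M$---since $v\in N^-(u_1)\setminus N^+(u_1)$ and $u_2\in N^+(u_1)$, the vertex $v$ is a source in colour class~$1$ and $u_2$ is a source in colour class~$2$---this produces a topological ordering for each of the two multi-vertex colour classes, showing they are acyclic and hence $dc(D)\leq r$.
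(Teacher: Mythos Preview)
Your argument is correct and follows the same blueprint as the paper: start from $\overleftrightarrow{K}_r$, apply Construction~\ref{cons8}, invoke Lemma~\ref{lem10} for the diachromatic number, and verify a two-colour extension on the path vertices for the dichromatic number via Remark~\ref{rem9}. The differences are cosmetic but worth noting. The paper secures non-symmetry in the boundary case $t=r$ by taking an odd path of length~$1$ (the digraph $\overleftrightarrow{K}_{n,1}(v_0)$), whereas you achieve the same effect by grafting a pendant arc onto $\overleftrightarrow{K}_r$ before applying the construction, so that $k=0$ already yields a non-symmetric $D^\ast$; both devices add a single extra vertex and are equivalent in spirit. More substantively, you handle $r=1$ separately via transitive tournaments and Theorem~\ref{teo2}, which is cleaner: the paper's proof as written restricts the main construction to $n\geq 2$ and only produces the pair $(1,1)$ in the first paragraph, so your explicit treatment of $r=1,\ t\geq 2$ actually closes a small gap left in the paper's argument.
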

\begin{proof}
Note that for $n\geq 1$, $\overleftrightarrow{K}_{n,1}(v_0)$ is a non-symmetric digraph such that $v_1$ is a source in  $V(\overleftrightarrow{K}_n)$, then 
$ dac (\overleftrightarrow{K}_{n,1} (v_0))=dac (\overleftrightarrow{K}_n)=n$. 
		
Let $n\geq 2$, $k\geq 1$. Consider the digraph  $\overleftrightarrow{K}_{n,2k}(v_0)$ obtained using  Construction \ref{cons8}. By Lemma \ref{lem10},  $dac(\overleftrightarrow{K}_{n,2k}(v_0))=n+k.$ 
		
Let $\varphi:V(\overleftrightarrow { K}_n) \to [n]$  be an optimal coloring for number dichromatic, such that $\varphi(v_0)=2$.
Since $\overleftrightarrow{K}_n$ is  an induced subdigraph of $\overleftrightarrow{K}_{n,2k}(v_0)$, it follows that  $dc(\overleftrightarrow{K}_{n,2k}(v_0))\geq n$. We extend the coloring $\varphi$  to a coloring of  $\varphi':V(\overleftrightarrow{K}_{n,2k}(v_0))\to [n]$ as follows:

	\begin{center}
		$\varphi'(w) = \left \{ \begin{array}{lll}
		\varphi(w)& \mbox{ if }w\in V(\overleftrightarrow{ K}_n);\\
		\varphi(v_{2i-1})=1&\mbox{ for } i\in [k] ;\\
		\varphi(v_{2i})=2&\mbox{ for } i\in[k].
		\end{array}\right.$
	\end{center}
		
Let $u \in V(\overleftrightarrow{K}_n)$ such that $\varphi'(u)=1$ and  $C_1', C_2', \ldots, C_n'$ are the chromatic classes of the coloring $\varphi'$. Then $C_{1}'=\{u\}\cup \{v_1, v_3,\ldots, v_{2k-1}\}$, $C_{2}'=\{v_0, v_2,\ldots, v_{2k}\}$ and $C_i^{'}=C_i$ for $i\geq 3$. Note that the  chromatic class $C_2'$ is acyclic by Remark \ref{rem9} $(i)$. By  definition of the arc  set $M$ in Construction \ref{cons8},  $u$ is a source in  chromatic class $C_1'$ and using  Remark  \ref{rem9} $(ii)$  we have that  $C_1'$ is acyclic.
Therefore, the coloring $\varphi'$ is acyclic,   $dc(\overleftrightarrow{K}_{n, 2k}(v_0))=n$ and the proof is complete.
\end{proof}

Observe that every digraph obtained in Theorem \ref{teo11} has symmetric arcs except when  $r=1$.
In the following theorem, we consider asymmetric digraphs.
	
\begin{theorem}\label{teo12}	
Let $r\geq 2$. Let $D$ be an $r$-dichromatic digraph admitting 	an  optimal coloring with a singular class $\{u\}$. Then $dc(D_n(v_0))=r$ for   $n\geq 0 $ and $v_0\neq u$. 
\end{theorem}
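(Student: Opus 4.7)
The plan is to verify the two inequalities separately. The lower bound $dc(D_n(v_0))\ge r$ is immediate, since $D$ is an induced subdigraph of $D_n(v_0)$ and any acyclic $k$-coloring of the whole digraph restricts to an acyclic $k$-coloring of $D$; the case $n=0$ is trivial as $D_0(v_0)=D$. For the upper bound I will exhibit an explicit acyclic $r$-coloring $\varphi'$ of $D_n(v_0)$. Starting from an optimal coloring $\varphi:V(D)\to[r]$ with singular class $C_1=\{u\}$, and setting $c=\varphi(v_0)$ (necessarily $c\neq 1$ because $v_0\neq u$), I extend $\varphi$ to $V(P)$ by assigning color $1$ to every odd-indexed path vertex and color $c$ to every even-indexed path vertex, in direct analogy with the extension used in Theorem \ref{teo11}.

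The classes $C_i'$ with $i\notin\{1,c\}$ are unchanged from $\varphi$ and hence acyclic. For $C_1'=\{u\}\cup\{v_1,v_3,\ldots\}$, Remark \ref{rem9}(ii) shows that the odd-indexed path vertices induce a transitive subtournament with $v_1$ as sink, while the set $M$ of Construction \ref{cons8} leaves $u$ as a source of $C_1'$ if $u\in N^+(v_0)$, as a sink of $C_1'$ if $u\in N^-(v_0)\setminus N^+(v_0)$, and as an isolated vertex of $C_1'$ otherwise. In every subcase $u$ cannot lie on a directed cycle, so $C_1'$ is acyclic.

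The main obstacle is the acyclicity of $C_c'=C_c\cup\{v_2,v_4,\ldots\}$, which also contains $v_0\in C_c$. Suppose $\gamma$ is a directed cycle in $C_c'$. By Remark \ref{rem9}(i) the even-indexed path vertices together with $v_0$ form a transitive subtournament, and $C_c$ is acyclic in $D$, so $\gamma$ must cross $M$ in both directions. Decompose $\gamma$ into alternating $D$-runs (inside $V(D)\cap C_c$) and $P$-runs (inside $\{v_2,v_4,\ldots\}$). Each $D$-run ends at some vertex $x$ with outgoing $M$-arc $(x,v_{2i})$, forcing $x\in N^-(v_0)\setminus N^+(v_0)$; each subsequent $D$-run starts at some vertex $y$ with incoming $M$-arc $(v_{2j},y)$, forcing $y\in N^+(v_0)$. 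Both $x,y$ lie in $C_c$, and by definition $x\to v_0$ and $v_0\to y$ are arcs of $D$. Replacing every $P$-run in $\gamma$ by the two-arc $D$-detour $x\to v_0\to y$ produces a closed walk in the induced subdigraph on $C_c$, which must contain a directed cycle and contradicts the acyclicity of $\varphi$ on $C_c$. Hence $C_c'$ is acyclic, $\varphi'$ is a valid acyclic $r$-coloring, and $dc(D_n(v_0))=r$. The delicate point in this plan is precisely the rerouting argument for $C_c'$: one has to be careful that the two mutually exclusive cases defining $M$ are what guarantees that every excursion into the path can be contracted to a path through $v_0$ in $D$.
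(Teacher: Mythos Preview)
Your strategy coincides with the paper's: the same extension $\varphi'$, the same treatment of $C_1'$ (your three-way case split is in fact slightly more careful than the paper's, which tacitly assumes $u$ and $v_0$ are adjacent), and the same reroute-through-$v_0$ mechanism for $C_c'$. The paper phrases the last step as a minimal-counterexample argument, shortcutting only the first excursion away from $v_0$, whereas you replace all $P$-runs at once; the content is the same.

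There is one small slip in your $C_c'$ analysis. You assert that each $D$-run begins at a vertex $y$ reached by an $M$-arc $(v_{2j},y)$, forcing $y\in N^+(v_0)$. But $v_0$ itself lies in $C_c$, and by the definition of $A(P')$ the arc $(v_{2j},v_0)$ is present for every $j\ge 1$; hence a $P$-run can exit directly to $y=v_0$ via $A(P')$ rather than via $M$, and then $y\notin N^+(v_0)$, so your two-arc detour $x\to v_0\to y$ degenerates. The repair is immediate: when $y=v_0$, replace the $P$-run by the single arc $x\to v_0$ (which exists because you already know $x\in N^-(v_0)\setminus N^+(v_0)$) and merge with the next $D$-run. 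The resulting closed walk in $C_c$ still contains a directed cycle, and the contradiction goes through. With this patch your argument is complete and essentially identical to the paper's.
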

\begin{proof}
For $r\geq 2$, $n\geq 0$ and $v_0\neq u$, let  $D_n(v_0)$  be  a digraph obtained  by Construction  \ref{cons8} and  let $\varphi:V(D_n(v_0)) \to [r]$ be  an  optimal  coloring for number dichromatic with a  singular  chromatic class  $\{u\}$.  Assume  that $C_1=\{u\}$ and $v_0 \in C_2$. We extend  the optimal coloring $\varphi$ to a coloring $\varphi' :V(D_n(v_0)) \to [r]$ as follows
	\begin{center}
		$\varphi'(w) = \left \{ \begin{array}{cll} 1 &\mbox{ if } w=v_{2i-1}, &\mbox{ with } i=1,2,\ldots, \left\lceil \frac{n}{2} \right\rceil;\\
		\\2 &\mbox{ if } w=v_{2i}, &\mbox{ with } i=0,1,\ldots,\left\lfloor \frac{n}{2}\right\rfloor;\\
		\\\varphi(w)& \mbox{ if  }w\in V(D).
		\end{array}\right.$
	\end{center}
		
The  chromatic classes of  $\varphi'$ are
\[C_1'=C_1 \cup \{v_1,v_3, \ldots, v_{2\left\lceil\frac{n}{2}\right\rceil -1 }\}, \quad  C_2^{'}=C_2\cup \{v_2,v_4,\ldots,v_{2\left \lfloor \frac{n}{2} \right\rfloor}\}\textrm{ and }C_i^{'}=C_i \mbox{ if } i\geq 3.\]

Note that every chromatic class $C'_i$,  with $i\geq 3$,  is  acyclic by construction. The  chromatic class  $C_{1}'\setminus \{u\}$ is acyclic by Remark $(i)$. By definition of the arc set $M$ in Construction \ref{cons8},  if $(u,v_0)\in A(D)$ then   $(v_{2i-1} , u)\in A(D_n(v_0))$  for  $i\in\{1,2,\ldots,\left \lceil \frac{n}{2} \right\rceil\}$; and  if $( v_0, u)\in A(D)$ then   $(u, v_{2i-1} )\in A(D_n(v_0))$  for  $i\in\{1,2,\ldots,\left \lceil \frac{n}{2} \right\rceil \}$. In both cases we have a transitive subtournament, therefore the chromatic class $C_{1}'$ is acyclic.
		
In order to prove that  $C_{2}'$ is acyclic, suppose for contradiction,  that there is  a cycle of color $2$  in $D_n(v_ 0)$. Since $C_2$ is acyclic by construction, the cycle contains a vertex $v_{2i}$ for  some $i\in\{1,2,\ldots,\left \lfloor \frac{n}{2} \right\rfloor\}$.
Let $\mathcal{C}$ be a cycle   of color $2$ with the minimum number of vertices of $P$.
If $v_0\notin V(\mathcal{C})$, consider minimum integer $\alpha$ such that
$v_{2\alpha}\in V(P)\cap V(\mathcal{C})$. In this case $N_{C_2'}^+(v_{2\alpha})=N_{C_2'}^+(v_0)$ and $N_{C_2'}^-(v_{2\alpha})=N_{C_2'}^-(v_0)$ and we can construct a cycle $\mathcal{C}'$ with less vertices of $P$ exchanging the vertices $v_ 0$ and $v_{2\alpha}$, see figure  \ref{fig2}(a),  contradicting the choice of $\mathcal{C}$. Thus $v_0 \in V(\mathcal{C}).$ 
\begin{figure}[ht!]	
\centering
\subfigure[Exchanging $v_0$ with $v_{2\alpha}.$]{
\includegraphics[width=58mm]{./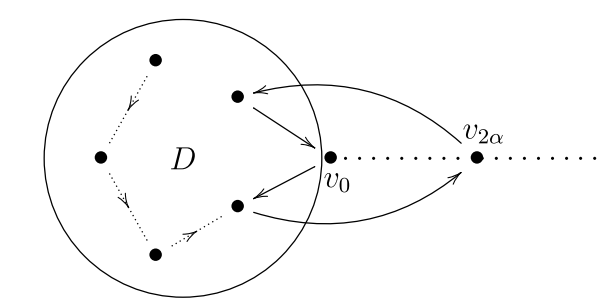}}\hspace{1cm}
\subfigure[The cycle $\mathscr{C}'.$]{
\includegraphics[width=62mm]{./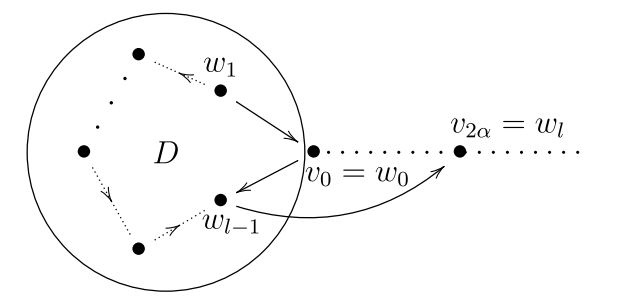}}
\caption{The cycle $\mathscr{C}$ in the  chromatic class of color $2.$ }
\label{fig2}
\end{figure}


Let $\mathcal{C}=(w_0,w_1,\ldots, w_k,w_0)$ such that $v_0=w_0$   and let   $l\in [k]$ be the minimum integer such that and   $w_l=v_{2\alpha}$ for $\alpha>0$.  By construction,  we have that $v_{2\alpha}\neq w_1$ and so  $w_1\in V(D)$. By construction
$N_{D}^+(v_{2\alpha})=N_{D}^+(v_0)$ and $N_{D}^-(v_{2\alpha})=N_{D}^-(v_0)$, since  $w_1\in N^-(w_2 )$ and $w_1\in N^+(w_0)$ then $v_{2\alpha}\neq w_2$. Thus $l\geq3$ and  $w_i\in V(D)$ for $i<l$.
Since  $(w_{l-1}, v_{2\alpha})\in A(D)$ and by  definition of the arc set $M$, is follows that  $(w_{l-1}, w _0)\in A(D)$.
Consider the  cycle $\mathcal{C}'=(w_0, w_1,\ldots,w_{j-1},w_0)$, see figure \ref{fig2}(b). All the vertices of $\mathcal{C}'$ belongs to the chromatic class $C_2$,  contradiction the  $C_2$ is acyclic. Thus, there are no monochromatic cycles of color $2$ and the result follows.
\end{proof}

By Lemma \ref{lem10} and Theorem \ref{teo12}, we have the following.

\begin{theorem}\label{teo13}
Let $D$ be a  critical $r$-dichromatic digraph with $dac(D)=t_0$.  Then for all $t \geq t_0$ there exists an asymmetric digraph $D'$ such that $dc(D')=r$ and $dac(D')=t$.
\end{theorem}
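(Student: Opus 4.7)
The plan is to apply Construction \ref{cons8} directly to the given critical digraph $D$, taking $n = 2(t-t_0)$, and then to combine Lemma \ref{lem10} with Theorem \ref{teo12}. The only thing that requires genuine care is checking the asymmetry of the output.

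First I would use criticality to produce an optimal acyclic $r$-coloring of $D$ in which some chromatic class is a singleton $\{u\}$. Since $D$ is vertex-critical $r$-dichromatic, for any $u \in V(D)$ the digraph $D-u$ admits an acyclic coloring with $r-1$ colors; assigning a new color to $u$ yields an optimal acyclic $r$-coloring of $D$ with $\{u\}$ a singular class. Hence the hypothesis of Theorem \ref{teo12} is satisfied for every choice of $u$, and in particular we may pick $u$ and then choose any $v_0 \in V(D) \setminus \{u\}$; this is possible since a critical $r$-dichromatic digraph with $r \geq 2$ has at least two vertices.

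Next I would set $k = t - t_0 \geq 0$ and let $D' = D_{2k}(v_0)$. Lemma \ref{lem10} gives
\[
dac(D') = dac(D) + k = t_0 + (t-t_0) = t,
\]
while Theorem \ref{teo12}, applied with the singular class $\{u\}$ and $v_0 \neq u$, gives $dc(D') = r$. So the numerical parameters are immediate once the construction is in place.

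The main point that needs verification is that $D'$ is asymmetric, and here the natural setting is to take $D$ itself asymmetric (as in the circulant tournaments $H_{p,s}$ from Theorem \ref{teo5}, which are critical $r$-dichromatic tournaments). The arcs added by Construction \ref{cons8} are those of $P'$ and $M$. The arcs of $P'$ are oriented unambiguously by the parity of $j-i$, so no symmetric arc appears among path vertices. For $M$, the two defining cases are indexed by $x \in N^-(v_0) \setminus N^+(v_0)$ and $x \in N^+(v_0)$; in an asymmetric $D$ these neighborhoods are disjoint, so each such $x$ falls in exactly one case and the arcs between $x$ and each $v_j$ are introduced in one direction only. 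Hence no pair of vertices receives mutually opposite arcs, and $D'$ is asymmetric, completing the argument.

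The main obstacle I anticipate is precisely this asymmetry bookkeeping: one must ensure that the parity-based orientation on $P'$ is globally consistent (no two arcs inside $P'$ collide) and that the disjointness $N^+(v_0) \cap N^-(v_0) = \emptyset$ suffices to keep $M$ free of symmetric pairs. Both reduce to checking the case definitions in Construction \ref{cons8}, which is straightforward once $D$ is taken asymmetric.
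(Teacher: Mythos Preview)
Your proposal is correct and follows exactly the route the paper indicates: the paper's entire proof is the single sentence ``By Lemma~\ref{lem10} and Theorem~\ref{teo12}, we have the following,'' and you carry this out by setting $k=t-t_0$, invoking criticality to obtain a singular class (so that Theorem~\ref{teo12} applies), and reading off $dac$ and $dc$ from the two cited results. You also supply the asymmetry verification that the paper omits entirely, and you are right that this step forces $D$ itself to be asymmetric (otherwise $D'=D_{2k}(v_0)$ contains $D$ as an induced subdigraph and inherits its symmetric arcs); this hypothesis is implicit in the paper's intended applications to tournaments but is not stated in Theorem~\ref{teo13}.
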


The next  theorem is a consequence of  Theorems \ref{teo1}, \ref{teo2},  \ref{teo12} and  Corollary \ref{cor6}.

\begin{theorem}\label{teo14}
For every  integer $r\geq 2$ and $t\geq ({r^2-r+2})/{2}$ ,  there exists an  asymmetric digraph $D$  with $dc(D)=r$ and $dac(D)=t.$
\end{theorem}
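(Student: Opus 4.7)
The plan is to fix $r \geq 2$ and $t \geq (r^2-r+2)/2$, split on whether $r = 2$ or $r \geq 3$, and in each case exhibit an asymmetric digraph realizing the pair by combining the tournaments supplied by Theorems \ref{teo1} and \ref{teo2} and by Corollary \ref{cor6} with Construction \ref{cons8}.

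For the base case $r = 2$, the bound reads $t \geq 2$, so I set $n := t-1 \geq 1$ and take the circulant tournament $T = \overrightarrow{C}_{2n+1}(1,2,\ldots,n)$. Theorem \ref{teo1} then gives $dc(T) = 2$, Theorem \ref{teo2} gives $dac(T) = \lceil(2n+1)/2\rceil = t$, and $T$ is asymmetric since it is a tournament. For $r \geq 3$, put $t_0 := (r^2-r+2)/2$ and start from the circulant tournament $H_r = H_{p,0}$ of Corollary \ref{cor6}, which satisfies $dc(H_r) = r$ and $dac(H_r) = t_0$, and is vertex-critical by Theorem \ref{teo5}. Write $t = t_0 + k$ with $k \geq 0$. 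If $k = 0$, take $D := H_r$. If $k \geq 1$, apply Construction \ref{cons8} to $H_r$ with a path of length $2k$ and a suitable base vertex $v_0$ to obtain $D := (H_r)_{2k}(v_0)$; Lemma \ref{lem10} then gives $dac(D) = t_0 + k = t$, and Theorem \ref{teo12} yields $dc(D) = r$ provided its singular-class hypothesis is met.

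Two subsidiary points need verification. To invoke Theorem \ref{teo12} I need an optimal $r$-coloring of $H_r$ with a singular class $\{u\}$ and a choice $v_0 \neq u$: vertex-criticality supplies this, since $dc(H_r - u) = r-1$ for every $u \in V(H_r)$, so coloring $H_r - u$ with $r-1$ colors and assigning a new color to $u$ produces such a coloring, and picking any $v_0 \neq u$ in $V(H_r)$ is possible as $|V(H_r)| \geq r \geq 3$. For asymmetry, observe that $H_r$ being a tournament gives $N^+(v_0) \cap N^-(v_0) = \emptyset$, so the remark after Construction \ref{cons8} about a possibly symmetric arc $(v_0,x)$ does not apply; moreover the arcs of $P'$ and of $M$ are each defined with a single direction per ordered pair, so no symmetric arc is introduced and the resulting digraph is asymmetric. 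I expect these two checks, extracting the singular color class from vertex-criticality and confirming that Construction \ref{cons8} preserves asymmetry when its input is a tournament, to be the only delicate points of the argument, since the numerical values of $dc$ and $dac$ are delivered directly by the cited lemmas.
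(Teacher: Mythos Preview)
Your proposal is correct and follows essentially the same route as the paper, which merely records Theorem~\ref{teo14} as a consequence of Theorems~\ref{teo1}, \ref{teo2}, \ref{teo12} and Corollary~\ref{cor6}; you have correctly filled in the details, including the use of Lemma~\ref{lem10} for the diachromatic number, the extraction of a singular class from vertex-criticality (Theorem~\ref{teo5}), and the verification that Construction~\ref{cons8} applied to a tournament yields a tournament and hence an asymmetric digraph. Your treatment of $r=2$ directly via $\overrightarrow{C}_{2t-1}(1,\dots,t-1)$ is a slight streamlining, avoiding Construction~\ref{cons8} in that case.
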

	
Note that $b(r)\leq ({r^2-r+2})/{2}$   by  Theorem \ref{teo14}. However, for some values of $b(r)$,  we can improve the upper bound using the following theorems.

\begin{theorem}\label{teo15}\cite{MR1310883}
$ST_{11}$ is the unique $4$-dichromatic regular tournament of order $11$.
\end{theorem}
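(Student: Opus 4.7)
The plan is to prove the theorem in two stages: first verify $dc(ST_{11}) = 4$, then establish uniqueness among the regular tournaments of order $11$. I would realise $ST_{11}$ as the Paley (quadratic residue) circulant tournament $\overrightarrow{C}_{11}(1,3,4,5,9)$, which is vertex- and arc-transitive under the action of $\mathbb{Z}_{11}$.

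For $dc(ST_{11}) = 4$, the upper bound is obtained by exhibiting an explicit partition of $\mathbb{Z}_{11}$ into four sets each inducing a transitive subtournament, a finite verification. For the lower bound, I would argue that no acyclic $3$-colouring exists. Since every acyclic colour class induces a transitive subtournament and the maximum transitive subtournament of $ST_{11}$ has order $4$, the only possible class-size profile for a $3$-colouring is $(4,4,3)$. Exploiting arc-transitivity to fix one transitive $4$-set without loss of generality, I would perform a case analysis over the remaining $7$ vertices and rule out every completion into a valid acyclic $3$-partition.

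For uniqueness, let $T$ be an arbitrary $4$-dichromatic regular tournament of order $11$, so that every vertex has out-degree $5$. By Theorem \ref{teo1}, $T \not\cong \overrightarrow{C}_{11}(1,2,3,4,5)$, which excludes the rotational tournament. I would then combine the regularity of $T$ with the non-existence of a $3$-partition into transitive subtournaments: first rule out any transitive subtournament of order $\geq 5$ in $T$ (since regularity of the residual six vertices would otherwise permit an acyclic $3$-colouring), then count $3$-cycles through each arc to match the arc-type invariants of $ST_{11}$, and finally identify $T \cong ST_{11}$ using the rigidity imposed by these invariants together with the out-degree constraint.

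The main obstacle is the uniqueness step: regularity plus $4$-dichromaticity still admits many candidate tournaments a priori, and a purely structural elimination requires delicate extremal and counting arguments, likely leveraging the fact that in $ST_{11}$ every arc lies on the same number of directed triangles. The depth of this step is reflected by the fact that the proof in \cite{MR1310883} is the main content of a paper devoted to the problem, and a complete verification may ultimately reduce to an exhaustive enumeration of the finitely many regular tournaments on $11$ vertices up to isomorphism, checking the dichromatic number of each.
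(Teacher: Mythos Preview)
The paper does not prove Theorem~\ref{teo15}; it is quoted verbatim from \cite{MR1310883} and used as an external input to improve the bound $b(4)$ in Table~\ref{table2}. There is therefore no ``paper's own proof'' to compare your proposal against.

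As for the content of your sketch: the outline for $dc(ST_{11})=4$ is sound and standard (explicit $4$-colouring for the upper bound, plus a case analysis ruling out a $(4,4,3)$ acyclic partition for the lower bound, using that the largest transitive subtournament has order~$4$). The uniqueness argument, however, is not a proof but a programme. You correctly identify the difficulty, yet the steps you list --- ruling out transitive subtournaments of order $\geq 5$, matching triangle counts per arc, and then invoking ``rigidity'' --- do not by themselves force $T\cong ST_{11}$: equality of local triangle invariants and regularity are not known to determine a tournament up to isomorphism in general, and you give no mechanism that would close the gap short of the full enumeration you mention at the end. In other words, the honest version of your uniqueness argument is the exhaustive check over regular tournaments on $11$ vertices, which is exactly the content of the cited paper; the intermediate structural steps you propose are heuristics rather than a reduction.
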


In Section {\bf 4.6 An Application} \cite{MR1817491}, V. Neumann-Lara constructed a $5$-dichromatic tournament of order $19$ and recently Bellitto et al. proved the following.     
\begin{theorem}\label{teo16}\cite{5dicromatico}
The minimum order of a $5$-dichromatic tournament is $19$.
\end{theorem}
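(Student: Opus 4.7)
My plan splits the theorem into the two standard directions. For the \emph{existence} part ``some $5$-dichromatic tournament has order $\le 19$,'' I would exhibit Neumann-Lara's explicit construction, referenced just before the statement. Checking that the resulting tournament has dichromatic number exactly $5$ reduces to verifying that no acyclic $4$-coloring exists: the circulant symmetry admits a reduced case analysis of candidate $4$-partitions, and this can be confirmed by a short computer search if desired.

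The hard direction is the \emph{lower bound}: every tournament on at most $18$ vertices has dichromatic number at most $4$. Assume for contradiction that a $5$-dichromatic tournament $T$ on $n \le 18$ vertices exists, chosen vertex-critical. A short but crucial observation is the degree bound: for every $v \in V(T)$ one has $d^+(v) \ge 4$ and $d^-(v) \ge 4$. Indeed, if $d^+(v) \le 3$, apply $dc(T-v) \le 4$ to acyclically $4$-color $T-v$; at least one color $c$ is absent from $N^+(v)$, and assigning $c$ to $v$ introduces no monochromatic cycle (any such cycle through $v$ would require an out-arc from $v$ to a $c$-colored vertex, which does not exist), contradicting criticality of $T$. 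This forces $n \ge 9$ and restricts each $d^{\pm}(v)$ to the range $[4, n-5]$, severely narrowing the candidate degree sequences, and in particular pushing critical candidates toward near-regularity.

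From here I would combine this structural information with a computer-assisted enumeration. Analogously to Theorem \ref{teo15}, which pins down the unique $4$-dichromatic regular tournament on $11$ vertices, the symmetry-reduced search space for critical $5$-dichromatic tournaments on $\le 18$ vertices should be small enough to examine via a SAT/CSP encoding of acyclic $k$-colorability, using forced substructures (the circulants of Theorems \ref{teo1} and \ref{teo5}, and the uniqueness of $ST_{11}$ from Theorem \ref{teo15}) together with isomorph-rejection to prune aggressively. Each surviving candidate is either shown to admit an acyclic $4$-coloring (ruling it out) or ruled out by a local structural obstruction such as the presence of a forbidden induced subtournament.

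The principal obstacle is the combinatorial explosion: the number of tournaments on $18$ vertices is astronomical, so the proof's novelty must lie in the pruning, not in the per-case check. I expect the bulk of the effort to be in engineering the search---combining the degree restrictions above, symmetry breaking, and forbidden-subtournament constraints---so that what remains is a tractable number of instances, each dispatched by a standard acyclic $4$-coloring routine.
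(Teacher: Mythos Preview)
The paper does not prove this statement at all: Theorem~\ref{teo16} is quoted from Bellitto et al.\ (the reference \texttt{5dicromatico}) and is used only as an input to sharpen the bound $b(5)$ in Table~\ref{table2}. There is therefore no ``paper's own proof'' to compare your proposal against.

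As for the proposal itself, it is a reasonable sketch of how one would expect such a result to be obtained --- existence via Neumann-Lara's explicit $19$-vertex tournament, and the lower bound via degree constraints on a vertex-critical counterexample followed by a computer-assisted exhaustive search. But it remains only a plan: the substantive content of the theorem is precisely the engineering of the pruning and the actual execution of the search over tournaments on up to $18$ vertices, and none of that is carried out here. In particular, the observation $d^{\pm}(v)\ge 4$ is correct and standard, but by itself it is nowhere near enough to make the enumeration tractable; the real work (symmetry breaking, SAT encodings, forbidden-substructure pruning) is asserted rather than done. So what you have written is not a proof but a description of a strategy whose feasibility you have not demonstrated.
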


\section{Discussion}
In this paper, we introduced two distinct constructions that contribute to the understanding of the relationship between the dichromatic and diachromatic numbers of digraphs. The first construction establishes that, for every pair of integers $2 \leq r \leq t$, there exist non-symmetric digraphs $D$ satisfying $dc(D) = r$ and $dac(D) = t$. The second construction provides an infinite family of integer pairs $(r, t)$, with $r \geq 2$ and $t \geq \frac{r^2 - r + 2}{2}$, for which there exist asymmetric digraphs such that $dc(D) = r$ and $dac(D) = t$.

Table~\ref{table2} summarizes the best known upper bounds $b(r)$ for the diachromatic number in terms of the dichromatic number $r$. Note that the bounds for $r = 4$ and $r = 5$ are improved by Theorems~\ref{teo15} and~\ref{teo16}, respectively.

\begin{table}[htbp!]
    \centering
    \begin{tabular}{|c|c|c|c|c|c|c|c|c|c|c|}
        \hline
        \( r \)  & 2 & 3 & 4 & 5 & 6 & 7 & 8 & 9 & 10 \\ \hline
        \( b(r) \) & 2 & 4 & 6 & 10 & 16 & 22 & 29 & 37 & 46 \\ \hline  
    \end{tabular}
    \caption{Upper bounds \( b(r) \) for the diachromatic number in terms of the dichromatic number \( r \).}
    \label{table2}
\end{table}

Despite these contributions, the general problem remains unresolved: Given a pair \( (r, t) \) with \( t \geq r \), does there always exist an asymmetric digraph \( D \) such that \( dc(D) = r \) and \( dac(D) = t \)? This question remains particularly compelling in the case \( r = t > 2 \), for which no explicit construction is currently known. In this direction, we propose the following conjecture:

\begin{conjecture}
    There are no asymmetric digraphs \( D \) such that \( dc(D) = dac(D) = r \) for \( r > 2 \).
\end{conjecture}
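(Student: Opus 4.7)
The plan is to proceed by contradiction. Suppose $D$ is an asymmetric digraph with $dc(D)=dac(D)=r$ for some $r\geq 3$. By the Interpolation Theorem (Theorem~\ref{teo7}) applied at $l=r$, $D$ admits a coloring $\varphi$ with chromatic classes $C_1,\ldots,C_r$ that is simultaneously optimal for the dichromatic and the diachromatic number. The goal is to produce a complete acyclic coloring of $D$ with $r+1$ colors, which would contradict $dac(D)=r$. The combination of asymmetry and completeness already imposes structure: for any pair of distinct classes $C_i,C_j$, the arcs witnessing $[C_i,C_j]\neq\emptyset$ and $[C_j,C_i]\neq\emptyset$ must lie on four distinct endpoints, so $|C_i|+|C_j|\geq 3$ for each pair and every class contains both a vertex with an out-neighbor in every other class and a vertex with an in-neighbor in every other class.

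The core step is a \emph{local split}: choose a class $C_i$ with $|C_i|\geq 2$ and a partition $C_i=C_i'\cup C_i''$ such that arcs run in both directions between $C_i'$ and $C_i''$ and, for every $j\neq i$, both $C_i'$ and $C_i''$ still have arcs to and from $C_j$. Acyclicity of the refined coloring is inherited from $\varphi$, so only completeness must be ensured. Since $D[C_i]$ is acyclic it admits a topological order, and bidirectional arcs between $C_i'$ and $C_i''$ appear precisely when the cut is non-monotone with respect to this order. The task is therefore to choose a non-monotone cut that simultaneously retains bidirectional incidence to every other $C_j$. When such a cut exists the construction is complete; when it does not, I would first try a bounded sequence of safe vertex re-assignments between classes (a swap-and-split), using the structural observations above to argue that some swap breaks the obstructive configuration before splitting.

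The principal obstacle is ruling out the extremal configurations in which no split and no swap-and-split succeed: for instance, when every $D[C_i]$ is a transitive tournament whose source is solely responsible for all arcs from $C_i$ to some $C_j$, so that any partition separating the source violates completeness while any partition not separating it is monotone. When direct refinement fails, I would fall back on a critical $r$-dichromatic sub-digraph $D_0\subseteq D$ and try to lift a complete acyclic coloring of $D_0$ on more than $r$ colors, which is provided by Corollary~\ref{cor6} and, for small $r$, by Theorems~\ref{teo15} and~\ref{teo16}. The genuinely hard step, and the one I expect to carry the most weight, is showing that no asymmetric digraph can simultaneously resist every such refinement; this is the open core of the conjecture and likely requires a new extremal argument that quantifies how strongly the one-way nature of arcs in an asymmetric digraph forces any optimal acyclic $r$-coloring to admit a bidirectional split, perhaps via a discharging argument or an averaging estimate on the bipartite arc densities $|[C_i,C_j]|$.
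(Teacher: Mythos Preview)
The statement you are trying to prove is labelled a \emph{Conjecture} in the paper and is explicitly presented as open; the paper gives no proof. So there is nothing on the paper's side to compare your attempt against.

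As for your proposal itself, it is not a proof but a plan, and you say so yourself in the last paragraph: the ``genuinely hard step'' of showing that no asymmetric digraph resists every refinement is left entirely unargued. Two concrete issues with the plan as written. First, the fallback to a critical $r$-dichromatic subdigraph $D_0\subseteq D$ does not work the way you suggest: Corollary~\ref{cor6} and Theorems~\ref{teo15}--\ref{teo16} concern \emph{specific} tournaments, not arbitrary critical subdigraphs, and even if $dac(D_0)>r$ there is no monotonicity principle that lifts a complete acyclic coloring of $D_0$ to one of $D$ (adding vertices can destroy completeness, and the new vertices must also be colored acyclically). Second, the ``local split'' idea already meets a real obstruction that you only name but do not overcome: a class $C_i$ whose induced DAG has a unique source (or sink) carrying all arcs to some other class cannot be bidirectionally split while preserving completeness, and there is no argument that swap-and-split eventually succeeds. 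Until one of these steps is actually carried out, what you have is an outline of where a proof might live, not a proof; this matches the paper's own assessment that the question is open.
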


A complete characterization of the attainable pairs \( (dc(D), dac(D)) \) is still lacking. Addressing this open problem will require the development of new techniques either to improve the existing upper bounds $b(r)$, to construct suitable asymmetric digraphs, or to prove 
that such digraphs do not exist.


\bibliographystyle{plain}
\bibliography{biblio}
	
\end{document}